\newtheorem{theorem}{Theorem}
\newtheorem{proposition}[theorem]{Proposition}
\newtheorem{corollary}[theorem]{Corollary}
\newtheorem{remark}[theorem]{Remark}
\newtheorem{definition}[theorem]{Definition}
\def\Even{\mathop{\rm Even}\nolimits}
\def\chr{\mathop{\rm chr}\nolimits}
\def\fchr{\mathop{\rm fchr}\nolimits}
\begin{document}
%\begin{frontmatter}

\title[Choosability of paths and free-choosability of cycles]{Choosability of a weighted path and free-choosability of a cycle}
\author{Yves Aubry, Jean-Christophe Godin and Olivier Togni}
\address{Institut de Math\'ematiques de Toulon, Universit\'e du Sud Toulon-Var,  France\\
and Laboratoire LE2I, Universit\'e de Bourgogne, France}

\email{yves.aubry@univ-tln.fr,  godinjeanchri@yahoo.fr
and olivier.togni@u-bourgogne.fr}

\subjclass[2010]{05C15, 05C38, 05C72}

\keywords{Coloring, Choosability, Free-choosability, Cycles.}

\date{\today}

\begin{abstract}
A graph $G$ with a list of colors $L(v)$ and weight $w(v)$ for each vertex $v$ is $(L,w)$-colorable if one can choose a subset of $w(v)$ colors from $L(v)$ for each vertex $v$, such that adjacent vertices receive disjoint color sets. In this paper, we give necessary and sufficient conditions for a weighted path to be $(L,w)$-colorable for some list assignments $L$.
Furthermore, we solve the problem of the free-choosability of a cycle.
\end{abstract}

\maketitle

\section{Introduction}
The concept of choosability of a graph, also called list coloring,  has been introduced by Vizing in \cite{viz76}, and independently by Erd\H{o}s, Rubin and Taylor in \cite{erdo79}. It contains of course the colorability as a particular case. Since its introduction, choosability has been extensively studied (see for example \cite{alontuzavoigt97, borodinkostochkawoodall97, thom03, tuz96, gravier96} and more recently \cite{GutnerTarsi, hav09}).
Even for the original (unweighted) version, the problem proves to be difficult, and is NP-complete for very restricted graph classes. Existing results for the weighted version mainly concern the case of constant weights (i.e. $(a,b)$-choosability), see \cite{alontuzavoigt97, erdo79, GutnerTarsi, tuz96}. 
For the coloring problem of weighted graphs, quite a little bit more is known, see~\cite{mcd00, hav01, hav02, KT06}.

This paper considers list colorings of weighted graphs by studying conditions on the list assignment for a weighted path to be choosable. 
Starting from the idea that in a path, the lists of colors of non consecutive vertices do not interfere, and following the work in \cite{god09}, we introduce here the notion of a waterfall list assignment of a weighted path. It is a list assignment such that any color is present only on one list or on two lists of consecutive vertices. We show that any list assignment (with some additional properties) can be transformed into a similar waterfall list assignment.
Then, using the result of Cropper et al.~\cite{CGHHJ} about Hall's condition for list multicoloring, we prove a necessary and sufficient condition for a weighted path with a given waterfall list $L$ to be $(L,w)$-colorable (Theorem \ref{theolistecascadechoisissable}) and use it to derive $(L,w)$-colorability results for some general lists assignments.

In 1996, Voigt considered the following problem:
let $G$ be a graph and $L$ a list assignment and assume that an arbitrary vertex $v\in V(G)$ is precolored by a color $f\in L(v)$. Is it always possible to complete this precoloring to a proper list coloring ? This question leads to the concept of free-choosability introduced by Voigt in \cite{voi96}. 

We investigate here the free-choosability of the first interesting case, namely the cycle. As an application of Theorem \ref{theolistecascadechoisissable}, we prove our second main result which gives a necessary and sufficient condition for a cycle to be $(a,b)$-free-choosable (Theorem \ref{theorem52these}). In order to get  a concise statement, we introduce the free-choice ratio of a graph, in the same way that Alon, Tuza and Voigt in \cite{alontuzavoigt97} introduced the choice ratio (which equals the so-called fractional chromatic number).

In addition to the results obtained in this paper, the study of waterfall lists may be of more general interest. For now on, the method is extended in~\cite{AGT10} to be used in a reduction process, allowing to prove colorability results on triangle-free induced subgraphs of the triangular lattice.

We recall in Section \ref{Definitions} some definitions related to choosability and free-choosability and introduce the definitions of the similarity between two lists and of a waterfall list that are fundamental for this paper. 
In Section \ref{waterfall}, we show how to transform a list into a similar waterfall list and present a necessary and sufficient condition for a weigthed path to be choosable. Theses result are used in Section \ref{cycle} to obtain conditions for the $(L,w)$-colorability of a weighted path and for the $(a,b)$-free-choosability of a cycle.

%%%%%%%%%%%%%%%%%%%%%%%%%%%%%

%%%%%%%%%%%%%%%%%%%%%%%%%%%%%%

%%%%%%%%%%%%%%%%%%%%%%%%%%%%%%

\section{Definitions and Preliminaries}\label{Definitions}
Let $G=(V(G),E(G))$ be a graph  where $V(G)$ is the set of vertices and $E(G)$ is the set of edges, and let $a$, $b$, $n$ and $e$ be integers.

Let $w$ be a weight function of $G$ i.e. a map
$w : V(G) \rightarrow {\mathbb N}$ and let $L$ be a list assignment of $G$ i.e. a map
$L : V(G) \rightarrow \mathcal{P}({\mathbb N})$. By abuse of language and to simplify, we will just call $L$ a list.
If $A$ is a finite set, we denote by $\vert A\vert$ the cardinal of $A$.

A weighted graph $(G,w)$ is a graph $G$ together with a weight function $w$ of $G$.

Let us recall the definitions of an $(L,w)$-colorable graph and an $(a,b)$-free-choosable graph which are essential in this paper.

\begin{definition}
An $(L,w)$-coloring $c$ of a graph $G$ is a map that associate to each vertex $v$ exactly $w(v)$ colors from $L(v)$ such that adjacent vertices receive disjoints color sets, i.e. for all  $v \in V(G)$: 
$$c(v) \subset L(v),\ \ \  | c(v) | =w(v),$$ and for all $vv' \in E(G)$:   $$c(v) \cap c(v')  = \emptyset.$$

We say that  $G$ is $(L,w)$-colorable if there exists an $(L,w)$-coloring $c$ of $G$.
\end{definition}

Particular cases of $(L,w)$-colorability are of great interest. In order to introduce them, we define $(L,b)$-colorings and $a$-lists.

An $(L,b)$-coloring $c$ of $G$ is an $(L,w)$-coloring of $G$ such that 
for all $v \in V(G)$, we have $w(v)=b.$

A $a$-list $L$ of $G$ is a list of $G$ such that for all $v \in V(G)$, we have  $|L(v) |=a.$

%A $a$-list-identic $L$ of $G$ is a $a$-list of $G$ such that
%$\forall v,v' \in V(G) : L(v)=L(v').$

\begin{definition}
$G$ is said to be $(a,b)$-choosable if for any $a$-list $L$ of $G$, there exists an $(L,b)$-coloring $c$ of $G$.
\end{definition}

\begin{definition}
$G$ is said to be $(a,b)$-free-choosable if for any $v_0 \in V(G)$, and for any  list $L$ of $G$ such that for any $v \in V(G) \setminus \{v_0\},$ we have $|L(v)|=a$ and $|L(v_0)|=b$, there exists an  $(L,b)$-coloring $c$ of $G$.
\end{definition}

%%%%%%%%%%%%%%%%%%%%%%%

%%%%%%%%%%%%%%%%%%%%%%%

%%%%%%%%%%%%%%%%%%%%%%
We define now the similarity of two lists with respect to a weighted graph:

\begin{definition} Let $(G,w)$ be a weighted graph. Two lists $L$ and $L'$ are said to be {\em similar} if this assertion is true:\\
$$G \text{ is }(L,w)\text{-colorable }\Leftrightarrow G  \text{ is }(L',w)\text{-colorable}.$$
\end{definition}

\medskip

The \textsl{path} $P_{n+1}$ of length $n$ is the graph with vertex set $V=\{v_{0},v_{1},\dots,v_{n}\}$ and edge set $E=\bigcup_{i=0}^{n-1} \{v_{i}v_{i+1}\}$. To simplify the notations, $L(i)$ denotes $L(v_{i})$ and $c(i)$ denotes $c(v_{i})$.\\ 

By analogy with the flow of water in waterfalls, we define a waterfall list as follows:
\begin{definition}
A {\em waterfall} list $L$ of a path $P_{n+1}$ of length $n$ is a list $L$ such that for all $i,j \in \{0,\dots,n\}$ 
with $|i-j| \geq 2$, we have $L(i) \cap L(j) = \emptyset$.
\end{definition}
Notice that another similar definition of a waterfall list is that any color is present only on one 
list or on two lists of consecutive vertices. Figure 1 shows a list $L$ of the path $P_5$ (on the left), together with a similar waterfall list $L^c$ (on the right).

\begin{definition} For a weighted path $(P_{n+1},w)$,
\begin{itemize}
\item A list $L$ is {\em good} if $|L(i)|\geq w(i) + w (i+1)$ for any $i, 1\leq i\leq n-1$.
\item The {\em amplitude} $A(i,j)(L)$ (or $A(i,j)$) of a list $L$ is $A(i,j)(L)=\cup_{k=i}^{j}L(k)$.
\end{itemize}
\end{definition}

%%%%%%%%%%%%%%%%%%%%%%%%%%%%%%%%%%%%%%%%%%%%%%%%%%%%%%%%%%%%%%%%%%%%%%%%%%%%%%%%%%%%%%%%%%%%
%%%%%%%  Exemple de listes semblables        %%%%%%%%%%%%%%%%%%%%%%%%%%%%%%%%%%%%%%%%%%%%%%% 
%%%%%%%%%%%%%%%%%%%%%%%%%%%%%%%%%%%%%%%%%%%%%%%%%%%%%%%%%%%%%%%%%%%%%%%%%%%%%%%%%%%%%%%%%%%%
\unitlength=0.8cm
\begin{picture}(20,8)
% Les boites
\put(1.8,4.5){\framebox(0.5,2.3){}}
\put(2.6,3.9){\framebox(0.5,2.3){}}
\put(3.4,6.2){\framebox(0.5,0.5){}}
\put(3.4,4.2){\framebox(0.5,1.5){}}
\put(3.4,2.5){\framebox(0.5,0.3){}}
\put(4.2,6.5){\framebox(0.5,0.3){}}
\put(4.2,5.4){\framebox(0.5,0.5){}}
\put(4.2,4.5){\framebox(0.5,0.4){}}
\put(4.2,3){\framebox(0.5,1){}}
\put(4.2,2.3){\framebox(0.5,0.2){}}
\put(5,6.4){\framebox(0.5,0.3){}}
\put(5,5.6){\framebox(0.5,0.3){}}
\put(5,4.8){\framebox(0.5,0.2){}}
\put(5,4.1){\framebox(0.5,0.4){}}
\put(5,3.5){\framebox(0.5,0.2){}}
\put(5,2.9){\framebox(0.5,0.3){}}
\put(5,2.1){\framebox(0.5,0.5){}}
% les noms des listes
\put(0.1,7){$L( )$}
\put(1.8,7){0}
\put(2.6,7){1}
\put(3.4,7){2}
\put(4.2,7){3}
\put(5,7){4}
%passage
\put(7.5,4){\vector(1,0){1}}
\put(8.5,4){\vector(-1,0){1}}
\put(7.3,3.5){similar}
\put(7.5,7){$L^{c}( )$}
% Les boites
\put(9.8,4.5){\framebox(0.5,2.3){}}
\put(10.6,3.9){\framebox(0.5,2.3){}}
\put(11.4,2.3){\framebox(0.5,2.1){}}
\put(12.2,1.8){\framebox(0.5,2.1){}}
\put(13,0.5){\framebox(0.5,1.5){}}
% les noms des listes
\put(9.8,7){0}
\put(10.6,7){1}
\put(11.4,7){2}
\put(12.2,7){3}
\put(13,7){4}
% Le titre
\put(1,0){Fig. 1. Example of a list $L$ which is similar to a waterfall list $L^c$.}
\end{picture}
%%%%%%%%%%%%%%%%%%%%%%%%%%%%%%%%%%%%%%%%%%%%%%%%%%%%%%%%%%%%%%%%%%
\bigskip

In \cite{CGHHJ}, Cropper et al. consider Philip Hall's theorem on systems of distinct representatives and its improvement by Halmos and Vaughan as statements about the existence of proper list colorings or list multicolorings of complete graphs. The necessary and sufficient condition in these theorems is generalized in the new setting as "Hall's condition'' : 
$$\forall H\subset G, \sum_{k\in C} \alpha(H,L,k) \ge \sum_{v\in V(H)} w(v),$$ where $C=\bigcup_{v\in V(H)}L(v)$ and $\alpha(H,L,k)$ is the independence number of the subgraph of $H$ induced by the vertices containing $k$ in their color list. Notice that $H$ can restricted to be a connected induced subgraph of $G$.

It is easily seen that Hall's condition is necessary for a graph to be $(L,w)$-colorable. Cropper et al. showed that the condition is also sufficient for some graphs, including paths:

\begin{theorem}[\cite{CGHHJ}]
\label{Cropper} For the following graphs, Hall's condition is sufficient to ensure an $(L,w)$-coloring:
\begin{itemize}
 \item[(a)] cliques;
 \item[(b)] two cliques joined by a cut-vertex;
 \item[(c)] paths;
 \item[(d)] a triangle with a path of length two added at one of its vertices;
 \item[(e)] a triangle with an edge added at two of its three vertices.
\end{itemize}
\end{theorem}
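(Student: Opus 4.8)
The plan is to separate the easy direction from the hard one. Hall's condition is clearly necessary: if an $(L,w)$-coloring $c$ exists, then for any induced subgraph $H$ and any color $k$ the vertices of $H$ that actually use $k$ form an independent set of the subgraph induced by the vertices listing $k$, so counting incidences gives $\sum_{v\in V(H)}w(v)=\sum_{k\in C}|\{v:k\in c(v)\}|\le\sum_{k\in C}\alpha(H,L,k)$. Hence everything reduces to proving sufficiency for each of the five families. The organizing observation is that every graph in (a)--(e) is assembled by gluing cliques and paths at cut-vertices, so I would first settle the two base cases (cliques and paths) and then glue.

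For cliques (case (a)) I would reduce to the classical theorem. In a clique every independent set is a single vertex, so for any subgraph $H$ one has $\alpha(H,L,k)=1$ when some vertex of $H$ lists $k$ and $0$ otherwise; thus $\sum_{k\in C}\alpha(H,L,k)=|\bigcup_{v\in V(H)}L(v)|$. Hall's condition then reads $|\bigcup_{v\in S}L(v)|\ge \sum_{v\in S}w(v)$ for every subset $S$ of vertices, which is exactly the hypothesis of the Halmos--Vaughan (weighted) form of Hall's marriage theorem guaranteeing a system of pairwise disjoint representatives of the prescribed sizes. That theorem delivers the required disjoint color sets $c(v)\subseteq L(v)$ directly.

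For paths (case (c)) I would argue by induction on the number of vertices, peeling the endpoint $v_0$. The crux is a reduction lemma: one must choose $c(v_0)\subseteq L(v_0)$ with $|c(v_0)|=w(v_0)$ so that, setting $L'(v_1)=L(v_1)\setminus c(v_0)$ and $L'(v_i)=L(v_i)$ for $i\ge 2$, Hall's condition still holds on the shorter path $v_1\cdots v_n$. The natural choice is to spend on $v_0$ the colors of $L(v_0)$ that are \emph{least contested} by $v_1$, preferring colors outside $L(v_1)$ and, among those inside $L(v_1)$, the ones whose removal weakens Hall's inequalities least. Proving that such a choice preserves Hall's condition for every subpath is the main obstacle; it relies on the fact that in a path only the single neighbor $v_1$ is affected, so the relevant inequalities change in a controlled way and an exchange/counting argument closes the induction. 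As an alternative I would keep in reserve the integer program with incidence variables $x_{i,k}$ subject to vertex demands $\sum_k x_{i,k}=w(v_i)$ and edge constraints $x_{i,k}+x_{i+1,k}\le 1$; the constraint matrix has the path/interval structure, and I would try to show it is totally unimodular, so that LP feasibility---itself equivalent to Hall's condition by duality---already yields an integral, hence genuine, coloring.

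Finally, the mixed graphs (b), (d), (e) would be handled by decomposition at their cut-vertices, reusing the clique and path cases on the blocks. The remaining obstacle is the gluing step: a cut-vertex $v$ must receive a single set $c(v)$ that extends to a valid coloring of \emph{every} block meeting $v$ at once. I would fix $c(v)$ first and show, using Hall's condition applied to subgraphs straddling $v$, that a common feasible choice exists, after which the blocks are colored independently. This cut-vertex consistency is where I expect the details to be most delicate, and it is precisely the feature restricting Theorem~\ref{Cropper} to these special families, since Hall's condition is known not to be sufficient for arbitrary graphs.
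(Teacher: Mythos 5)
First, a point of fact: the paper does not prove this statement. It is quoted from Cropper, Goldwasser, Hilton, Hoffman and Johnson \cite{CGHHJ} and used as a black box (only part (c), for paths, is actually invoked, in the proof of Theorem \ref{theolistecascadechoisissable}). So there is no in-paper proof to compare yours against, and your proposal has to stand on its own. The necessity direction and case (a) are sound: in a clique $\alpha(H,L,k)\in\{0,1\}$, so Hall's condition collapses to $|\bigcup_{v\in S}L(v)|\ge\sum_{v\in S}w(v)$ for every vertex subset $S$, and the Halmos--Vaughan theorem hands you the pairwise disjoint representative sets. But for paths you only \emph{state} the key reduction lemma --- that one can choose $c(v_0)$ among the ``least contested'' colors so that Hall's condition survives on $v_1\cdots v_n$ --- and this is exactly where all the work lies; the fallback via total unimodularity is likewise only asserted (the full constraint matrix mixing the demands $\sum_k x_{i,k}=w(v_i)$ with the edge constraints $x_{i,k}+x_{i+1,k}\le 1$ is not an interval matrix, and the claimed equivalence of LP feasibility with Hall's condition is itself something to prove).

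The more serious flaw is your organizing principle for (b), (d), (e). If a generic ``decompose into blocks that are cliques or paths and glue at the cut-vertices'' argument worked, it would prove sufficiency of Hall's condition for \emph{every} graph whose blocks are cliques and paths: three cliques through a common vertex, a star $K_{1,3}$, a triangle with pendant edges at all three vertices, a triangle with a longer pendant path, arbitrary trees. None of these appear in the statement, and the list is conspicuously restrictive --- a pendant path of length exactly two in (d), pendant edges at only two of the three vertices in (e). That restrictiveness is telling you that Hall's condition is \emph{not} sufficient for the slightly larger graphs your scheme would cover, i.e.\ the general gluing step you defer as ``delicate'' is actually false, and each of (b), (d), (e) requires an ad hoc argument exploiting its exact shape. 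As written, then, the proposal establishes necessity and case (a), but leaves the path lemma unproved and rests cases (b), (d), (e) on a principle that cannot hold in the generality you invoke it.
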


This result is very nice, however, it is often hard to compute the left part of Hall's condition, even for paths. Hence, for our study on choosability of weigthed paths, we find convenient to work with waterfall lists for which, as we will see in the next section, Hall's condition is very easy to check.

\section{waterfall lists}\label{waterfall}

We first show that any good list can be transformed into a similar waterfall list.
\begin{proposition}\label{similar}
\label{llf}
 For any good list $L$ of $P_{n+1}$, there exists a similar waterfall list $L^c$ with $\vert L^c(i)\vert=\vert L(i)\vert$ for all $i\in\{0,\ldots,n\}$.
\end{proposition}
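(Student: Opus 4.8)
The plan is to produce $L^c$ from $L$ by renaming colors so that no color survives on two vertices at distance $\ge 2$, while keeping every cardinality $|L^c(i)|=|L(i)|$ and the colorability status of the whole path unchanged. Note first that ``similar'' here is a single biconditional about $P_{n+1}$ itself (not about all subpaths), and that the only obstruction to being a waterfall list is a color $k$ lying in $L(i)\cap L(j)$ with $|i-j|\ge 2$. So the target is: given a good $L$, build a size-preserving waterfall $L^c$ such that $P_{n+1}$ is $(L,w)$-colorable if and only if it is $(L^c,w)$-colorable.

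One direction of any fresh-renaming transformation is routine. If $P_{n+1}$ is $(L,w)$-colorable, then renaming an occurrence of $k$ by a brand-new color $k'$ only removes constraints between the affected vertex and its neighbours, so substituting $k'$ for $k$ in a given coloring (when that vertex uses $k$) yields a proper coloring of the renamed list. The delicate point is the converse: one must ensure the renaming does not create a \emph{spurious} coloring, turning an uncolorable instance into a colorable one. Here I would stress that renaming a \emph{single} occurrence at a time is genuinely insufficient: detaching a color shared by two adjacent vertices hands them extra freedom, and small examples show this can convert an uncolorable good list into a colorable one. Consequently the transformation must be carried out globally, and its correctness established by keeping exact track of colorability, rather than step by step.

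The route I would take for the converse is to measure colorability through Theorem \ref{Cropper}: for a path, $(L,w)$-colorability is equivalent to Hall's condition on every subpath $[i,j]$. For a \emph{waterfall} list this condition collapses to the transparent amplitude inequalities $|A(i,j)(L^c)|\ge \sum_{t=i}^{j} w(t)$ for all $i\le j$, because a color present on at most two consecutive vertices has independence-number contribution exactly $1$; I would verify this specialization directly from Theorem \ref{Cropper}. I would then build $L^c$ by a left-to-right cascade (as in Figure 1): at each vertex $i$ I decide how many of its $|L(i)|$ colors are private and how many are shared with vertex $i+1$, choosing these counts so that $L^c$ violates some amplitude inequality precisely when $L$ violates Hall's condition on some subpath. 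The good hypothesis $|L(i)|\ge w(i)+w(i+1)$ is exactly what guarantees these private/shared counts are nonnegative and can be realized with the prescribed per-vertex sizes, and that short intervals never become binding.

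The main obstacle is the consistency of this global matching: showing that the cascade can be chosen so that the family of binding (equality/violation) intervals for the amplitudes of $L^c$ reproduces the family of Hall-violations of $L$, so that the two existential statements ``some interval fails'' coincide. Controlling this with the good hypothesis and the interval structure of Hall's condition is the technical heart; once it is in place, similarity follows and the equality $|L^c(i)|=|L(i)|$ is built into the construction.
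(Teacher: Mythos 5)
Your proposal correctly identifies the two directions and correctly observes that the delicate one is ensuring the renaming does not create a spurious coloring, but as written it is a plan rather than a proof: the step you yourself call ``the technical heart'' --- choosing the private/shared counts in the left-to-right cascade so that the amplitude violations of $L^c$ coincide exactly with the Hall violations of $L$ --- is never carried out, and it is far from routine. Hall's condition for a general list on a subpath $[i,j]$ involves $\alpha(P_{i,j},L,k)$, which depends on the full pattern of occurrences of each color $k$ across the interval, not merely on the consecutive intersections $|L(i)\cap L(i+1)|$ that your cascade controls; no canonical choice of the shared counts is exhibited, and no argument is given that one exists. Without that, similarity is not established.

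Moreover, your reason for abandoning the local, one-color-at-a-time route is misplaced, and that route is in fact how the paper proceeds. The paper first makes each color occupy a set of consecutive vertices (renaming stray occurrences at distance at least $2$ by fresh colors, which is harmless), and then repeatedly takes a color $x$ present on three or more consecutive vertices $i_x,\dots,j_x$ and renames it to a fresh color $y$ on $L(i_x+2),\dots,L(j_x)$ only. The operation you worry about --- detaching a color shared by exactly two adjacent vertices --- never occurs: each split leaves $x$ on the adjacent pair $i_x, i_x+1$ and puts $y$ on the consecutive block from $i_x+2$ onward. The converse direction (no spurious colorings) is then proved by an explicit recoloring: if an $(L',w)$-coloring $c'$ uses $x$ at $i_x+1$ and $y$ at $i_x+2$, the goodness hypothesis $|L(i_x+1)|\ge w(i_x+1)+w(i_x+2)$ guarantees a substitute color $z$ for $x$ at $i_x+1$, either outside $c'(i_x)\cup c'(i_x+1)\cup c'(i_x+2)$ or, failing that, inside $\bigl(c'(i_x)\setminus c'(i_x+2)\bigr)\cap L'(i_x+1)$, in which case $x$ and $z$ are also exchanged at $i_x$. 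This is precisely where the hypothesis that $L$ is good enters, and it is the content your proposal leaves unproven.
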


\begin{proof}
We are going to transform a good list $L$ of $P_{n+1}$  into a waterfall list $L^c$ and we will prove that $L^c$ is similar with $L$.

First, remark that if a color $x\in L(i-1)$ but $x\not\in L(i)$ for some $i$ with  $1\leq i \leq n-1$, then for any $j>i$, one can change the color $x$ by a new color 
$y\not\in A(0,n)(L)$ in the list $L(j)$, without changing the choosability of the list.
With this remark in hand, we can assume that $L$ is such that any color $x$ appears on the lists of consecutive vertices $i_x,\ldots , j_x$.

Now, by permuting the colors if necessary, we can assume that if $x<y$ then $i_x < i_y$ or $i_x =i_y$ and $j_x\leq j_y$.

Repeat the following transformation:

1. Take the minimum color $x$ for which $j_x \geq i_x +2$ i.e. the color $x$ is present on at least three vertices 
$i_x, i_x +1 ,i_x +2,\ldots , j_x$;

2. Replace color $x$ by a new color $y$ in lists $L(i_x +2),\ldots , L(j_x )$;

\noindent
until the obtained list is a waterfall list (obviously, the number of iterations is always finite).

Now, we show that this transformation preserves the choosability of the list.
Let $L'$ be the list obtained from the list $L$ by the above transformation.

If $c$ is an $(L,w )$-coloring of $P_{n+1}$  then the coloring $c'$ obtained from $c$ by changing the color $x$ by the color $y$ in the color set 
$c(k)$ of each vertex $k\geq i_x +2$ (containing $x$) is an  $(L',w )$-coloring since $y$ is a new color.

Conversely, if $c'$ is an $(L',w )$-coloring of $P_{n+1}$, we consider two cases:

\medskip
\noindent
{\sl Case 1}: $x\not\in c'(i_x +1)$ or $y\not\in c'(i_x +2)$. In this case, the coloring $c$ obtained from $c'$ by changing the color $y$ by the 
color $x$ in the color set $c'(k)$ of each vertex $k\geq i_x +2$ (containing $y$) is an  $(L,w )$-coloring.

\medskip
\noindent
{\sl Case 2}: $x\in c'(i_x +1)$ and $y\in c'(i_x +2)$. We have to consider two subcases:
\begin{itemize}
 \item Subcase 1: $L'(i_x +1 )\not\subset (c'(i_x ) \cup c'(i_x +1) \cup c'(i_x +2))$. 
There exists $z \in L'(i_x +1 )\setminus (c'(i_x ) \cup c'(i_x +1) \cup c'(i_x +2))$ and the coloring $c$ obtained from $c'$ by changing the color 
$x$ by the color $z$ in $c'(i_x +1)$ and replacing the color $y$ by the color $x$ in the color set $c'(k)$ of each vertex $k\geq i_x +2$ (containing $y$) is an  $(L,w )$-coloring.
 \item Subcase 2: $L'(i_x +1 )\subset (c'(i_x ) \cup c'(i_x +1) \cup c'(i_x +2))$. We have
$$|L'(i_x +1)| = \Big| \Big( (c'(i_x ) \cup c'(i_x +1) \cup c'(i_x +2)\Big) \cap L'(i_x +1)\Big|.  $$ 
As $c'$ is an $(L',w)$-coloring of $P_{n+1}$, we have
$$ |L'(i_x +1)|=|c'(i_x +2)  \cap L'(i_x +1)| + |c'(i_x +1)  \cap L'(i_x +1)|+\Big|\Big( c'(i_x) \backslash c'(i_x +2) \Big) \cap L'(i_x +1)\Big|,$$
$$ |L'(i_x +1)| - w (i_x +1) - |c'(i_x+2)  \cap L'(i_x +1)| = \Big|\Big( c'(i_x) \backslash c'(i_x +2) \Big) \cap L'(i_x +1)\Big|.  $$

Since $y \in c'(i_x +2)$ and $y \notin L'(i_x +1)$, we obtain that 
$$|c'(i_x +2)  \cap L'(i_x +1)| \leq w (i_x +2) - 1,$$ hence
$$ \Big( |L'(i_x +1)| - w (i_x +1) - w (i_x +2) \Big) + 1 \leq \Big| \Big( c'(i_x) \backslash c'(i_x +2) \Big) \cap L'(i_x +1)\Big|.$$
But, by hypothesis, $L$ is a good list. Thus $|L(i_x +1)|=|L'(i_x +1)| \geq w(i_x +1) + w(i_x +2)$ and 
$$ 1 \leq \Big|\Big( c'(i_x) \backslash c'(i_x +2) \Big) \cap L'(i_x +1)\Big| .$$
Consequently, there exists $z \in \Big( c'(i_x) \backslash c'(i_x +2) \Big) \cap L'(i_x +1)$. The coloring $c$ is then constructed from $c'$ by changing
the color $x$ by the color $z$ in $c'(i_x +1)$, the color $z$ by the color $x$ in $c'(i_x)$ and the color $y$ by the color $x$ in the set $c'(k)$ of each vertex $k\geq i_x +2$.
\end{itemize}

\end{proof}

% As mentioned in the next remark, an $(L^{c},w)$-coloring of $P_{n+1}$ has some nice properties.
% 
% \begin{remark}\label{lemme7these}
% Let $L^{c}$ be a waterfall list  of $P_{n+1}$  and $c$ be an $(L^{c},w)$-coloring of $P_{n+1}$. 
% Then, it is clear 
% %(see Lemme 7 of \cite{god09}) 
% that if $ i,j \in \{0,\dots,n\}$ such that $ i \not= j$ then 
% $$ c(i) \cap c(j) = \emptyset\ \ \ 
% {\rm and} \\ \  \ |\bigcup_{k=i}^{j} c(k)|=\sum_{k=i}^{j} w(k).$$
% \end{remark}

%We are ready to state the following theorem  which gives a necessary and sufficient condition for a weighted path to be 
%$(L^{c},w)$-colorable where $L^c$ is a waterfall list. This is not a big restriction to consider waterfall lists because good lists introduced in Section \ref{waterfall} are those well adapted to list coloring problems, and, thanks to Proposition~\ref{similar}, it remains to consider waterfall lists.

The following theorem, which is a corollary of Theorem~\ref{Cropper}, gives a necessary and sufficient condition for a weighted path to be 
$(L^{c},w)$-colorable where $L^c$ is a waterfall list.

\begin{theorem}
\label{theolistecascadechoisissable}
Let $L^{c}$ be a waterfall list of a weighted path $(P_{n+1},w)$. 
Then $P_{n+1}$ is $(L^{c},w)$-colorable if and only if:
$$\forall i,j \in \{0,\dots,n\},  \: |\bigcup_{k=i}^{j}L^{c}(k)| \geq \sum_{k=i}^{j} w (k).$$ 
\end{theorem}

\begin{proof}
``if'' part: Recall that $A(i,j)=\cup_{k=i}^{j}L^c(k)$.
For $i,j \in \{0,\dots,n\}$, let $P_{i,j}$ be the subpath of $P_{n+1}$ induced by the vertices $i,\ldots, j$.
By Theorem \ref{Cropper}, it is sufficient to show that 
$$\forall i,j \in \{0,\dots,n\},  \: \sum_{x\in A(i,j)} \alpha(P_{i,j},L^c,x) \geq \sum_{k=i}^{j} w (k).$$
Since the list is a waterfall list, then for each color $x\in A(i,j)$, $\alpha(P_{i,j},L^c,x)=1$ and thus 
$\sum_{x\in A(i,j)} \alpha(P_{i,j},L^c,x) = |A(i,j)| = |\bigcup_{k=i}^{j}L^{c}(k)|$.

``only if'' part: If $c$ is a $(L^{c},w)$-coloring of $P_{n+1}$  then
$$\forall i,j \in \{0,\dots,n\} : \: \bigcup_{k=i}^{j}L^{c}(k) \supset \bigcup_{k=i}^{j} c(k).$$
Since $L^c$ is a waterfall list, it is easily seen that $|\bigcup_{k=i}^{j} c(k)|=\sum_{k=i}^{j} w(k)$. Therefore, 
$\forall i,j \in \{0,\dots,n\} : \: |\bigcup_{k=i}^{j}L^{c}(k)| \geq \sum_{k=i}^{j} w (k)$.
\end{proof}

%\bigskip
\section{Choosability of a path and free-choosability of a cycle}\label{cycle}

Theorem~\ref{theolistecascadechoisissable} has the following corollary when the list is a good waterfall list and 
 $|L(n)| \geq w(n)$. 

\begin{corollary}
\label{lemmeencascadeequige}
Let $L^{c}$ be a waterfall list of a weighted path $(P_{n+1},w)$ such that for any 
$i, 1\leq i\leq n-1$, $|L^{c}(i)|\geq w(i) + w (i+1)$ and
$|L^c(n)| \geq w(n)$. Then $P_{n+1}$ is  $(L^{c},w)$-colorable if and only if  
$$ \forall j \in \{0,\dots,n\}, \:|\bigcup_{k=0}^{j}L^{c}(k)| \geq \sum_{k=0}^{j} w (k). $$ 
\end{corollary}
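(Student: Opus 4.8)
The plan is to derive the full hypothesis of Theorem~\ref{theolistecascadechoisissable} — that $|A(i,j)|\ge\sum_{k=i}^{j}w(k)$ for \emph{every} pair $0\le i\le j\le n$ — from the weaker assumption that this holds only for the intervals anchored at $i=0$, and then invoke that theorem. The reverse implication is immediate: if $P_{n+1}$ is $(L^c,w)$-colorable, Theorem~\ref{theolistecascadechoisissable} supplies the amplitude inequality for all $i,j$, and specializing to $i=0$ gives exactly the stated condition.

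For the forward direction, the intervals with $i=0$ are precisely the hypothesis, so the real work is to check $|A(i,j)|\ge\sum_{k=i}^{j}w(k)$ for $1\le i\le j\le n$. Here I would exploit the waterfall property directly. Setting $S=\{i,i+2,i+4,\dots\}\cap\{i,\dots,j\}$, any two indices of $S$ differ by at least $2$, so by the waterfall condition the lists $\{L^c(k)\}_{k\in S}$ are pairwise disjoint. Since each of them is contained in $A(i,j)$, this yields $|A(i,j)|\ge\sum_{k\in S}|L^c(k)|$.

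Next I would bound each term from below using the weight assumptions. Writing $I_k=\{k,k+1\}\cap\{i,\dots,j\}$, the family $(I_k)_{k\in S}$ partitions $\{i,\dots,j\}$, and I claim $|L^c(k)|\ge\sum_{l\in I_k}w(l)$ for every $k\in S$. Indeed, if $k\le n-1$ the good condition gives $|L^c(k)|\ge w(k)+w(k+1)\ge\sum_{l\in I_k}w(l)$, while if $k=n$ — which forces $k=j$ and $I_k=\{n\}$ — the assumption $|L^c(n)|\ge w(n)$ suffices. Summing over $k\in S$ and using that the $I_k$ tile the interval gives $\sum_{k\in S}|L^c(k)|\ge\sum_{k=i}^{j}w(k)$, hence $|A(i,j)|\ge\sum_{k=i}^{j}w(k)$. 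With all intervals verified, Theorem~\ref{theolistecascadechoisissable} completes the argument.

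The conceptual point — and the reason only the $i=0$ intervals need to be assumed — is that for $i\ge1$ the set $S$ never contains the index $0$, so the whole estimate relies solely on the good bounds available at indices $1,\dots,n-1$ together with $|L^c(n)|\ge w(n)$. Since $L^c(0)$ carries no good lower bound, intervals anchored at $0$ genuinely require the explicit assumption, whereas every other interval is forced automatically by the disjointness of the step-$2$ sublists. I expect the only delicate point to be the partition bookkeeping in the $I_k$ (whether the last block of the tiling is a pair or a singleton, according to the parity of $j-i$); phrasing it through the partition rather than by explicit parity cases should keep this routine.
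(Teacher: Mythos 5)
Your proof is correct and follows essentially the same route as the paper: the reverse direction is a direct specialization of Theorem~\ref{theolistecascadechoisissable}, and the forward direction verifies the amplitude inequality for intervals with $i\geq 1$ by combining the pairwise disjointness of the step-$2$ sublists (the waterfall property) with the good-list bounds, exactly as the paper does via $\sum_{k\equiv i\ (2)}|L^c(k)|\geq\sum_{k\equiv i\ (2)}(w(k)+w(k+1))\geq\sum_{k=i}^{j}w(k)$ with the convention $w(n+1)=0$. Your $I_k$-partition phrasing is just a tidier bookkeeping of the same estimate.
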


\begin{proof}
Under the hypothesis, if $P_{n+1}$ is $(L^{c},w)$-colorable, then Theorem 
 \ref{theolistecascadechoisissable} proves in particular the result.\\
 
 Conversely, since
 $L^{c}$ is a waterfall list of  $P_{n+1}$, we have:
$$\forall i,j \in \{1,\dots,n\}, \: |A(i,j)|=|\cup_{k=i}^{j} L^c(k)|  \geq |\cup_{\substack{k=i \\ k-i \ even}}^{j} L^c(k)| =\sum_{\substack{k=i \\ k-i  \ even}}^{j} |L^{c}(k)|. $$
Since $L^c$ is a good list of   $P_{n+1}$  (for simplicity, we set   $w(n+1)=0$):
$$ \forall i,j \in \{1,\dots,n\}, \: \sum_{\substack{k=i \\ k-i  \ even}}^{j} |L^{c}(k)| \geq \sum_{\substack{k=i \\ k-i  \ even}}^{j} (w (k) + w (k+1))  \geq \sum_{k=i}^{j} w (k),$$
then we obtain for all  $ i,j \in \{1,\dots,n\}, \: |A(i,j)| \geq \sum_{k=i}^{j} w (k)$. Since for all $ j \in \{0,\dots,n\}, \:|A(0,j)| \geq \sum_{k=0}^{j} w (k) $, Theorem \ref{theolistecascadechoisissable} concludes the proof.
\end{proof}

\bigskip

Another interesting corollary holds for lists $L$ such that 
 $|L(0)|=|L(n)|= b$, and for all $i \in \{1,\dots,n-1\},  |L(i)|=a$. The function $\Even$ is defined for any real $x$ by: $\Even(x)$ is the smallest even integer $p$ such that $p\geq x$.

\begin{corollary}
\label{theorem48these}
Let $L$ be a list of $P_{n+1}$ such that $|L(0)|=|L(n)|= b$,
and $|L(i)|=a=2b+e$ for all $i \in \{1,\dots,n-1\}$ (with $e\not=0$). 
\begin{center}
If $n \geq \Even\Bigl(\frac{2b}{e}\Bigr)$ then $P_{n+1}$ is $(L,b)$-colorable.                                                                              \end{center}
\end{corollary}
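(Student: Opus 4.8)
The plan is to reduce to a waterfall list and then to the one-sided amplitude condition of Corollary~\ref{lemmeencascadeequige}. Throughout I assume $e>0$ (and $b\ge 1$), which is the substantive case and is exactly what makes $L$ a good list: for $1\le i\le n-1$ we have $|L(i)|=2b+e\ge 2b=w(i)+w(i+1)$, while $|L(n)|=b=w(n)$. (For $e<0$ the list need not be good and the statement already fails on $P_2$, since two equal lists of size $b$ cannot be disjointly $b$-colored.) First I would apply Proposition~\ref{similar} to replace $L$ by a similar waterfall list $L^{c}$ with $|L^{c}(i)|=|L(i)|$ for every $i$; by the definition of similarity, $P_{n+1}$ is $(L,b)$-colorable if and only if it is $(L^{c},b)$-colorable. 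Since $L^{c}$ is a waterfall list with $|L^{c}(i)|\ge w(i)+w(i+1)$ for $1\le i\le n-1$ and $|L^{c}(n)|\ge w(n)$, Corollary~\ref{lemmeencascadeequige} applies, and it remains to verify
$$\forall j\in\{0,\dots,n\},\qquad |A(0,j)|\ge (j+1)b.$$

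The key estimate exploits, exactly as in the proof of Corollary~\ref{lemmeencascadeequige}, that in a waterfall list the lists of the same parity are pairwise disjoint. Hence $A(0,j)$ contains the disjoint union of either the even-indexed or the odd-indexed lists among $L^{c}(0),\dots,L^{c}(j)$, so $|A(0,j)|$ dominates each of these two sums of cardinalities. For $0\le j\le n-1$ all of $L^{c}(1),\dots,L^{c}(j)$ have size $2b+e$ while $|L^{c}(0)|=b$; choosing the even-indexed lists (which include $L^{c}(0)$) when $j$ is even and the odd-indexed lists when $j$ is odd, a short count gives
$$|A(0,j)|\ \ge\ (j+1)b+\Big\lfloor \tfrac{j+1}{2}\Big\rfloor e\ \ge\ (j+1)b,$$
so these cases hold unconditionally, with strict inequality once $j\ge 1$ (this is where $e>0$ enters). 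The only tight case is $j=n$, where $|L^{c}(n)|=b$ rather than $2b+e$; picking the parity class consisting of interior lists yields
$$|A(0,n)|\ \ge\ nb+\Big\lfloor \tfrac{n}{2}\Big\rfloor e,$$
the relevant count being $n/2$ interior lists when $n$ is even and $(n-1)/2$ when $n$ is odd.

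Finally I would convert the hypothesis $n\ge \Even(2b/e)$ into this last inequality, treating the parity of $n$ separately. If $n$ is even, then $n\ge\Even(2b/e)$ gives $n\ge 2b/e$, hence $(n/2)e\ge b$ and $|A(0,n)|\ge nb+(n/2)e\ge (n+1)b$. If $n$ is odd, then since $\Even(2b/e)$ is even we get $n\ge \Even(2b/e)+1$, so $n-1\ge 2b/e$, hence $\tfrac{n-1}{2}e\ge b$ and again $|A(0,n)|\ge (n+1)b$. Thus the amplitude condition holds for all $j$, and Corollary~\ref{lemmeencascadeequige} gives the conclusion. I expect the only delicate point to be this parity bookkeeping at $j=n$, where the shrinking of the terminal list to size $b$ forces the threshold and the bound matches $n\ge\Even(2b/e)$ exactly; the remaining indices and the two structural reductions are direct applications of Proposition~\ref{similar} and Corollary~\ref{lemmeencascadeequige}.
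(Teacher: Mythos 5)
Your proof is correct and follows essentially the same route as the paper's: reduce to a similar waterfall list via Proposition~\ref{similar}, invoke Corollary~\ref{lemmeencascadeequige}, and verify the one-sided amplitude bound $|A(0,j)|\ge (j+1)b$ by summing over a parity class of indices, with the hypothesis $n\ge\Even(2b/e)$ entering only at $j=n$ and handled by splitting on the parity of $n$. Your explicit observation that $e>0$ is what is really needed (the paper writes only $e\neq 0$ but implicitly uses $e>0$ to make $L$ a good list) is a minor but welcome clarification.
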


\begin{proof}
The hypothesis implies that $L$ is a good list of $P_{n+1}$, hence by Proposition \ref{llf}, there exists a waterfall list $L^c$ similar to $L$. So we get:
$$\forall i \in \{1,\dots,n-1\}, \ |L^{c}(i)| \geq 2b = w (i) + w (i+1)$$
and $|L^{c}(n)| \geq b=w (n)$. By Corollary  \ref{lemmeencascadeequige} it remains to prove that: 
$$\forall j \in \{0,\dots,n\},  \:|A(0,j)| \geq \sum_{k=0}^{j} w (k) = (j+1)b.$$ 

{\sl Case 1}: $j=0$. By hypothesis, we have  $|A(0,0)|=|L^{c}(0)| \geq b$.\\

{\sl Case 2}: $j \in \{1,\dots,n-1\}$. Since $L^{c}$ is a waterfall list of $P_{n+1}$ we obtain that: \\
if $j$ is even
$$|A(0,j)| \geq \sum_{\substack {k=0 \\ k\ even}}^j |L^c(k)| = b + \sum_{\substack{k=2 \\ k  \ even}}^{j} 2b = b + \frac{j}{2} 2b = (j+1)b,$$
and if $j$ is odd
$$ |A(0,j)| \geq  \sum_{\substack {k=0 \\ k\ odd}}^j |L^c(k)| = \sum_{\substack{k=1 \\ k  \ odd}}^{j} 2b = \frac{j+1}{2} 2b =(j+1)b.  $$
Hence for all $j \in \{0,\dots,n-1\}, \ |A(0,j)| \geq  (j+1)b$.

{\sl Case 3}: $j=n$. Since $n \geq \Even\Bigl(\frac{2b}{e}\Bigr)$ by hypothesis, and 
$$\mid A(0,n) \mid \geq \sum_{\substack {k=0 \\ k\ odd}}^n |L^c(k)|=\left \{
\begin{array}{l}
a\frac{n}{2}  \ \  \ \ \ \  \ \ \ \ if  \ n \ is \ even \\
b+a\frac{n-1}{2}   \  \ \ \  \ \ \ \  \ otherwise
\end{array}
\right.
$$
we deduce that $|A(0,n)| \geq (n+1)b$, which concludes the proof.
\end{proof}

\bigskip

For example, let $P_{n+1}$ be the path of length $n$ with a list $L$ such that $|L(0)|=|L(n)|= 4$,
and $|L(i)|=9$ for all $i \in \{1,\dots,n-1\}$. Then the previous Corollary tells us that we can find an $(L,4)$-coloring of $P_{n+1}$ whenever $n\geq 8$. In other words, if $n\geq 8$, we can choose 4 colors on each vertex such that adjacent vertices receive disjoint colors. If  $|L(i)|=11$ for all $i \in \{1,\dots,n-1\}$, then $P_{n+1}$ is $(L,4)$-colorable whenever $n\geq 4$.

\medskip
The above result is a starting tool used in \cite{AGT10} to attack McDiarmid and Reed's conjecture claiming that every triangle free induced subgraph of the triangular lattice is $(9,4)$-colorable (hence the values $a=9$ and $b=4$ are somehow ``natural'').
It is also used in the following to determine the free-choice-ratio of the cycle.

%%%%%%%%%%%%%%%%%%%%%%%%%%%%%%%%%%%%%%%%%%%%%%%%%

%%%%%%%%%%%%%%%%%%%%%%%%%%%%%%%%%%%%%%%%%%%%%%%%%

%\section{Free-choosability of a cycle}\label{cycle}

\bigskip

The cycle $C_{n}$  of length $n$ is the graph with vertex set  $V=\{v_{0},\dots,v_{n-1}\}$ and edge set $E=\bigcup_{i=0}^{n-1} \{v_{i}v_{i+1 (mod \ n)}\}$.

\bigskip

Let $F\mathcal{C}_h(x)$ be the set of graphs $G$ which are $(a,b)$-free-choosable for all $a,b$ such that $\frac{a}{b} \geq x$:

$$F\mathcal{C}_h(x)=\{ G\ | \ \forall \ \frac{a}{b} \geq x, \ G \ \text{is} \ (a,b)\text{-free-choosable}\}.$$

Moreover, we can define the free-choice ratio 
 $\fchr(G) $ of a graph $G$ by:
 
$$\fchr(G):=\inf \{\frac{a}{b} \ | \ G \  \text{is}\ (a,b)\text{-free-choosable} \}.$$

If $\lfloor x \rfloor$ denotes the greatest integer less or equal to the real $x$,  we can state:

%%%%%%%%%%%%%%%
% Deuxime rsultat principal :
%%%%%%%%%%%%%%%

\begin{theorem}
\label{theorem52these}
If $C_n$ is a cycle of length $n$, then 
$$C_n \in F\mathcal{C}_h(2+ \Big\lfloor \frac{n}{2} \Big\rfloor ^{-1}). $$
Moreover, we have:
$$\fchr(C_n)=2+ \Big\lfloor \frac{n}{2} \Big\rfloor ^{-1}.$$
\end{theorem}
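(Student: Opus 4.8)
The plan is to establish the theorem in two parts, matching its two assertions. The first part is the membership $C_n \in F\mathcal{C}_h(2+\lfloor n/2\rfloor^{-1})$, which is a \emph{sufficiency} statement: whenever $a/b \geq 2 + \lfloor n/2\rfloor^{-1}$, the cycle $C_n$ is $(a,b)$-free-choosable. The second part is the matching \emph{lower bound}, i.e.\ that $C_n$ fails to be $(a,b)$-free-choosable for some ratio $a/b$ arbitrarily close to $2+\lfloor n/2\rfloor^{-1}$ from below, so that the infimum defining $\fchr(C_n)$ is exactly this value and is not smaller. These two halves require genuinely different techniques: the upper bound is constructive (cut the cycle into a path and apply the path machinery), while the lower bound is a counterexample construction.

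\medskip
\textbf{Sufficiency (membership in $F\mathcal{C}_h$).} First I would reduce the cyclic problem to the path problem already solved. Fix a precolored vertex, say $v_0$, with $|L(v_0)|=b$, and all other lists of size $a$, where $a/b \geq 2 + \lfloor n/2\rfloor^{-1}$, equivalently $a \geq 2b + b\lfloor n/2\rfloor^{-1}$. Since $v_0$ must be colored with exactly its $b$ available colors (as $|L(v_0)|=b$ and $w(v_0)=b$), its color set $c(v_0)$ is forced. I would then delete $v_0$ from the cycle and pass to the path $P_n$ on the remaining vertices $v_1,\dots,v_{n-1}$ (reindexed), but I must account for the constraint that the two neighbors of $v_0$ on the cycle may not use the forced colors $c(v_0)$. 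So I would replace the lists at those two endpoint vertices by $L(v_1)\setminus c(v_0)$ and $L(v_{n-1})\setminus c(v_0)$, each of size at least $a-b$. This yields a weighted path with interior lists of size $a \geq 2b$ and endpoint lists of size at least $a-b \geq b + b\lfloor n/2\rfloor^{-1} \geq b$, to which I want to apply Corollary~\ref{theorem48these} (or directly Corollary~\ref{lemmeencascadeequige}). The endpoint weights are $w=b$ and the endpoint lists exceed $b$, the interior lists exceed $2b=w(i)+w(i+1)$, so the list is good; Proposition~\ref{llf} produces a similar waterfall list and Corollary~\ref{lemmeencascadeequige} reduces colorability to checking the amplitude inequalities $|A(0,j)| \geq (j+1)b$ for all $j$. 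The threshold $\Even(2b/e)$ from Corollary~\ref{theorem48these}, with $e=a-2b \geq b\lfloor n/2\rfloor^{-1}$, must be verified to be at most the path length $n-1$ (roughly), which is precisely where the value $2+\lfloor n/2\rfloor^{-1}$ gets forced: the arithmetic $2b/e \leq 2\lfloor n/2\rfloor$ combined with the $\Even$ rounding is designed to land exactly at the available path length.

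\medskip
\textbf{The lower bound (tightness of $\fchr$).} For this I would exhibit, for each pair $(a,b)$ with $a/b < 2 + \lfloor n/2\rfloor^{-1}$ (equivalently $e=a-2b$ small relative to $b/\lfloor n/2\rfloor$), a precoloring and a list assignment admitting no completion. The natural construction uses a symmetric list where the same color classes cascade around the cycle so that the amplitude bound in Theorem~\ref{theolistecascadechoisissable} just barely fails on the whole cycle: one precolors $v_0$, propagates the forced color conflicts around, and shows the total supply $|A|$ of colors available on the remaining path is one short of the demand $nb$. Concretely, one arranges the lists so that $|A(0,n-1)| = nb - 1 < nb$, contradicting the necessary amplitude condition. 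The construction must be calibrated so the deficit appears exactly when $n\,e < 2b$ for even $n$ (and the analogous odd-$n$ inequality), matching $\lfloor n/2\rfloor$.

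\medskip
\textbf{The main obstacle} I anticipate is the lower-bound construction and the exact rounding bookkeeping around $\Even(\cdot)$ and $\lfloor n/2\rfloor$, particularly handling the parity of $n$ separately (even versus odd cycles give slightly different extremal configurations, as already visible in Case 3 of Corollary~\ref{theorem48these}). Verifying that the counterexample is \emph{sharp}---failing for every ratio below the threshold yet with the sufficiency proof succeeding at the threshold---requires matching the $\Even$ rounding in the sufficiency direction against an explicit deficit of exactly one color in the necessity direction. The reduction from cycle to path in the sufficiency half is routine once the forced coloring of $v_0$ and the set subtraction at its neighbors are set up carefully; the delicate part is confirming the endpoint list sizes survive the subtraction, i.e.\ that $a-b \geq b$, which holds since $a \geq 2b$, so the path lists never drop below the weight.
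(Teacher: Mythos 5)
Your overall strategy (cut the cycle at the precolored vertex, invoke the path machinery of Corollary~\ref{theorem48these}/Corollary~\ref{lemmeencascadeequige} for sufficiency, and build an explicit deficient list for tightness) is the same as the paper's, but your reduction in the sufficiency half has a genuine quantitative gap. You \emph{delete} $v_0$ and pass to the path on $v_1,\dots,v_{n-1}$, of length $n-2$, with the colors of $c(v_0)$ removed from the two end lists. The paper instead \emph{opens} the cycle at $v_0$, duplicating it, to get a path of length $n$ whose two endpoints both carry the forced $b$-set $L_0$; this is exactly the format of Corollary~\ref{theorem48these}, and the threshold $n\geq\Even(2b/e)$ is met because $2b/e\leq 2\lfloor n/2\rfloor\leq n$ and $2\lfloor n/2\rfloor$ is even. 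With your shorter path, applying Corollary~\ref{theorem48these} (after shrinking the end lists to size $b$) requires $n-2\geq\Even(2b/e)$, which fails at the extremal ratio: for $C_6$ with $(a,b)=(7,3)$ one has $e=1$, $\Even(2b/e)=6>4=n-2$, yet the theorem asserts $(7,3)$-free-choosability of $C_6$. So the route you emphasize, including your stated check ``$\Even(2b/e)$ at most the path length,'' breaks precisely at the boundary the theorem is designed to hit. Your parenthetical alternative --- applying Corollary~\ref{lemmeencascadeequige} directly, keeping the end lists at their full size $a-b$ --- can be made to work, but then the amplitude inequalities $|A(0,j)|\geq (j+1)b$ must be re-derived from scratch for this modified list profile (the binding case being $j=n-2$, where the computation gives exactly the condition $a/b\geq 2+\lfloor n/2\rfloor^{-1}$); none of the stated corollaries delivers this off the shelf, and you do not carry out that computation. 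The two lost vertices of path length are not a bookkeeping detail: they are exactly what the duplication trick is there to recover.

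On the lower bound, your plan is also on the right track but incomplete in a way that matters. You propose to violate the amplitude condition $|A|\geq nb$ of Theorem~\ref{theolistecascadechoisissable}, but that condition is only \emph{necessary} for waterfall lists; for a general list a color may be reused on non-adjacent vertices, and a list with total amplitude below $nb$ can still admit an $(L,b)$-coloring (e.g.\ $P_3$ with $L(0)=L(2)=\{1\}$, $L(1)=\{2\}$, $b=1$). So a counterexample built this way must either be arranged to be (similar to) a waterfall list or be verified by a direct forcing argument. The paper sidesteps half of this: for odd $n$ it simply quotes Voigt's result that the choice ratio of an odd cycle equals $2+\lfloor n/2\rfloor^{-1}$ and uses $\fchr(C_n)\geq\chr(C_n)$; only for even $n$ (where the cycle is $2$-choosable and no such shortcut exists) does it exhibit an explicit cascading list together with a precoloring $c_0=\{1,\dots,b\}$ that cannot be completed. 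You anticipate a parity split but give neither the construction nor the verification, so as written this half is a sketch rather than a proof.
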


\begin{proof}
Let $a,b$ be two integers such that  $a/b \geq  2+\lfloor \frac{n}{2}\rfloor ^{-1}$. Let $C_n$ be a cycle of length $n$ and $L$ a $a$-list of $C_n$. Without loss of generality, we can suppose that $v_0$ is the vertex chosen for the free-choosability and $L_0\subset L(v_0)$ has $b$ elements. 
It remains to construct an $(L,b)$-coloring $c$ of $C_n$ such that $c(v_0)=L_0$.
Hence we have to construct an $(L',b)$-coloring  $c$ of $P_{n+1}$  such that $L'(0)=L'(n)=L_0$ and for all $i \in \{1,...,n-1\}$,  $L'(i)=L(v_i)$. We have 
$|L'(0)|=|L'(n)|=b$ and for all  $i \in \{1,...,n-1\}$,  $|L'(i)|=a$.
Since 
$a/b \geq 2+\lfloor \frac{n}{2}\rfloor ^{-1}$ and 
$e=a-2b$, we get 
$e/b \geq\lfloor \frac{n}{2}\rfloor^{-1}$ hence
$n \geq \Even(2b/e)$.
Using Corollary \ref{theorem48these}, we get:

$$C_n \in F\mathcal{C}_h(2+ \Big\lfloor \frac{n}{2} \Big\rfloor ^{-1}).$$

Hence, we have that $\fchr(C_n) \leq 2+\lfloor \frac{n}{2}\rfloor ^{-1}.$ Moreover, let us prove that 
 $M=2+\lfloor \frac{n}{2}\rfloor ^{-1}$ is reached. \\

For $n$ odd, Voigt has proved in \cite{voi98} that the choice ratio $\chr(C_n)$ of a cycle of odd length $n$ is exactly $M$.
% $$\chr(G):=\inf \{\frac{a}{b} \ | \ G \  \text{is}\ (a,b)\text{-choosable} \}=M.$$
Hence $\fchr(C_n)\geq \chr(C_n)=M$, and the result is proved.

For $n$ even, let   $a,b$ be two integers such that  $\frac{a}{b} < M$. We construct a counterexample for the free-choosability: let $L$ be the list of $C_n$ such that

$$L(i) = \left \{
\begin{array}{l}
\{1,\dots,a\} \  \ \ \  \ \ \ \ \ \ \  \  \ \ \ \ \ \ \ \  \ \ \ \ \ \ \ \  \ \ \ \ \ \ \ \  if \ i \in \{0,1\} \\
\{1+\frac{i-1}{2}a,\dots,(\frac{i-1}{2}+1)a\} \  \ \  \ \ \ \ \ \ \ \ \  if \ i \not=n-1 \ is \ odd \\
\{b+1+\frac{i-2}{2}a,\dots,b+(\frac{i-2}{2}+1)a\}  \  \ \ \ \ \ \ \ if \ i \ is \ even \ and \ i \not= 0 \\
\{1,\dots,b,1+(\frac{n-4}{2}+1)a,\dots,(\frac{n-4}{2}+2)a-b\} \ \ \  \ \ if \ i=n-1
\end{array}
\right.
$$ 

If we choose  $c_0=\{1,\dots,b\} \subset L(0)$, we can check that it does not exist an  $(L,b)$-coloring of $C_n$ such that $c(0)=c_0$, so we could not do better. 
\end{proof}

\begin{remark} In particular, the previous theorem implies that
if $n \geq \Even(\frac{2b}{e})$ then the cycle $C_{n}$ of length $n$ is $(2b+e,b)$-free-choosable.
\end{remark}

\begin{remark} 
Erd\H{o}s, Rubin and Taylor have stated in \cite{erdo79} the following question:
If $G$ is $(a,b)$-colorable, and $\frac{c}{d}>\frac{a}{b}$, does it imply that $G$ is $(c,d)$-colorable ?
Gutner and Tarsi have shown in \cite{GutnerTarsi} that the answer is negative in general.
If we consider the analogue question for free-choosability, then the previous theorem implies that it is true for the cycle.
\end{remark}

%%%%%%%%%%%%%%%%%%%%%%%%%%%%%%%%%%%%%%%%%%%%%%%%%%%%%%%%%%%%%%%%%%%%%%%%%%%%%%%%%%%%%%%%%%%%%%%%%%%%%%%%%%%%%%%
%%%%%%%%%%%%%%%%%%%%%%%%%%%%%     BIBLIOGRAPHIE              %%%%%%%%%%%%%%%%%%%%%%%%%%%%%%%%%%%%%%%%%%%%%%%%%%
%%%%%%%%%%%%%%%%%%%%%%%%%%%%%%%%%%%%%%%%%%%%%%%%%%%%%%%%%%%%%%%%%%%%%%%%%%%%%%%%%%%%%%%%%%%%%%%%%%%%%%%%%%%%%%%

\end{document}